\newcommand{\shrinkmargins}[1]{
  \addtolength{\textheight}{#1\topmargin}
  \addtolength{\textheight}{#1\topmargin}
  \addtolength{\textwidth}{#1\oddsidemargin}
  \addtolength{\textwidth}{#1\evensidemargin}
  \addtolength{\topmargin}{-#1\topmargin}
  \addtolength{\oddsidemargin}{-#1\oddsidemargin}
  \addtolength{\evensidemargin}{-#1\evensidemargin}
  }
\newtheorem{theorem}{Theorem}
\newtheorem*{theorem*}{Theorem}
\newtheorem{conjecture}{Conjecture}
\newtheorem{proposition}[theorem]{Proposition}
{Claim}
\theoremstyle{remark}
\numberwithin{theorem}{section} \numberwithin{equation}{section}
\def\func#1{\mathop{\rm #1}}%
\begin{document}
\title[BO by Induction]{Proof of the Bessenrodt--Ono inequality by Induction}
\author{Bernhard Heim }
\address{Lehrstuhl A f\"{u}r Mathematik, RWTH Aachen University, 52056 Aachen, Germany}
\email{bernhard.heim@rwth-aachen.de}
\author{Markus Neuhauser}
\address{Kutaisi International University, 5/7, Youth Avenue,  Kutaisi, 4600 Georgia}
\email{markus.neuhauser@kiu.edu.ge}
\subjclass[2010] {Primary 05A17, 11P82; Secondary 05A20}
\keywords{Integer Partitions, Polynomials, Partition Inequality}
\begin{abstract}
In 2016 Bessenrodt--Ono discovered an inequality addressing
additive and multiplicative properties of the partition function.
Generalization by several authors have been given; on partitions with rank in a given residue class by
Hou--Jagadeesan and Males, on $k$-regular partitions by Beckwith--Bessenrodt, on $k$-colored partitions by
Chern, Fu, Tang, and Heim--Neuhauser on their polynomization, and Dawsey--Masri on the Andrews ${\it spt}$-function. 
The proofs depend on non-trivial asymptotic formulas related to the circle method on one side, or 
a sophisticated combinatorial proof invented by Alanazi--Gagola--Munagi. 
We offer in this paper a new proof of
the Bessenrodt--Ono inequality, which
is built on a
well-known recursion formula for partition numbers. We extend the proof to the
result of Chern--Fu--Tang and its polynomization. Finally, we also obtain a new result.
\end{abstract}
\maketitle
\section{Introduction and main results}
Bessenrodt and Ono \cite{BO16} discovered
an  interesting 
inequality for partition numbers $p(n)$. Their discovery nudged further research and new results
in several directions.

A partition $\lambda$
of $n$ is any non-increasing sequence $\lambda_1,
\ldots,  \lambda_d$ of positive integers whose sum is $n$.
The number of partitions of $n$ is denoted by $p(n)$. Table \ref{p1} records the first 10 values and the special case
$p\left( 0\right) =1$.
\begin{table}[H]
\[
\begin{array}{cccccccccccc}
\hline
n & 0 & 1 & 2 & 3 & 4 & 5 & 6 & 7 & 8 & 9 & 10 \\ \hline \hline
p\left( n \right) & 1 & 1 & 2 &
3 &
5 &
7 &
11 &
15 &
22 & 30 & 42 \\
\hline
\end{array}
\]
\caption{\label{p1}
Values of $ p \left( n \right)$ for $ 0 \leq n \leq 10$.}
\end{table}
A partition is called a $k$-colored partition of $n$
if each part can appear in $k$ colors.
\begin{theorem}[Bessenrodt--Ono 2016]
Let $a$ and $b$ be positive integers. Let $a,b \geq 2$ and $a+b \geq 10$. Then
\begin{equation}
p(a) \, p(b) > p(a+b).
\end{equation}
\end{theorem}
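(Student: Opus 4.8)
The plan is to prove the inequality by a single-variable induction obtained after fixing the smaller argument. Without loss of generality assume $2 \le a \le b$. I regard $a$ as fixed and induct on $b$, writing $q(n) := p(n) - p(n-1)$ for the number of partitions of $n$ whose parts are all $\ge 2$. The elementary engine throughout is Euler's pentagonal recursion
$$p(n) = \sum_{k \ge 1} (-1)^{k-1}\left[ p\!\left(n - \tfrac{k(3k-1)}{2}\right) + p\!\left(n - \tfrac{k(3k+1)}{2}\right)\right],$$
from which I first record the structural facts I will need: $p$ is strictly increasing for $n \ge 1$, the difference sequence $q(n)$ is non-decreasing, and, most importantly, the growth ratios are controlled, so that $p(m+1)/p(m) \ge p(n+1)/p(n)$ whenever $n \ge m$ is past a small initial range.

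For the inductive step I compare $p(a)p(b+1)$ with $p(a+b+1)$ by peeling off one difference on each side:
$$p(a)p(b+1) - p(a+b+1) = \bigl[p(a)p(b) - p(a+b)\bigr] + \bigl[p(a)\,q(b+1) - q(a+b+1)\bigr].$$
The first bracket is positive by the induction hypothesis, so the step reduces to the single inequality
$$p(a)\,q(b+1) \ge q(a+b+1), \qquad (\dagger)$$
Equivalently, since $a \ge 2$ forces $a+b+1 \ge b+3$, it suffices that the ratio $q(a+b+1)/q(b+1)$ never exceeds $p(a)$; this is where the growth control of the previous paragraph enters, bounding $q(a+b+1)$ by $p(a)\,q(b+1)$ once the arguments leave the small-index regime.

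The base cases are finite and explicit: for each fixed small value of $a$ I must verify the inequality at the least admissible $b$ (for instance $a=2,\ b=8$ gives $2\cdot 22 = 44 > 42 = p(10)$), using the tabulated values together with a handful of further values produced from the recursion. The cases with $a+b = 9$, where the equality $p(2)p(7) = 30 = p(9)$ occurs, confirm that the threshold $a+b \ge 10$ is sharp and must be respected when anchoring the induction.

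The main obstacle is the growth-ratio statement underlying $(\dagger)$. Because the ratios $p(n+1)/p(n)$ are \emph{not} monotone at small $n$, proving $q(a+b+1) \le p(a)\,q(b+1)$ for all admissible pairs, rather than merely asymptotically, is the technical heart of the argument. I expect to establish it by a nested induction that bottoms out in the monotonicity of $q$ and the sign pattern of the pentagonal recursion, thereby avoiding any appeal to the circle method or to log-concavity results proved by asymptotic means.
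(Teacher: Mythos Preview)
Your reduction to the single inequality $(\dagger)$ is clean, but the proposal stops exactly where the real difficulty lies. You identify $(\dagger)$ as ``the technical heart'' and then defer its proof to a ``nested induction'' using the pentagonal recursion, without supplying any of that argument. Note what $(\dagger)$ actually demands: combining it with the induction hypothesis, you are essentially asking for $p(a+b+1)/p(a+b) \le p(b+1)/p(b)$, i.e.\ eventual log-concavity of $p(n)$. This is precisely the result of Nicolas and DeSalvo--Pak, whose known proofs rely on Rademacher/Lehmer-type estimates---the very analytic input you say you wish to avoid. There is no known way to extract log-concavity (even in the weak form you need) from the sign pattern of the pentagonal recursion alone, and your sketch gives no indication of how such an argument would go. As written, the proposal trades the Bessenrodt--Ono inequality for a statement that is at least as hard.

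There is also a structural problem with the induction. Fixing $a$ and inducting on $b \ge a$ requires, for every $a \ge 5$, the base case $b=a$, i.e.\ $p(a)^2 > p(2a)$. That is an infinite family of base cases and is itself a special case of the theorem you are trying to prove; ``each fixed small value of $a$'' does not cover it. By contrast, the paper inducts on the single parameter $n=a+b$ via the divisor-sum recursion $n\,p(n)=\sum_{k}\sigma(k)p(n-k)$, so that the induction hypothesis applies to all decompositions of smaller $n$ simultaneously. The dominant $k=1$ term already yields a positive contribution of order $p(a-1)p(b)/a$, and the remaining terms are controlled by the elementary bounds $\sigma(n)\le n(1+\ln n)$ and $p(n)\ge \sum_{\ell\le m}\binom{n-1}{\ell-1}/\ell!$; this closes the induction for $a\ge 1093$ and leaves only a finite check. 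Your route would need a genuinely new idea to fill the gap at $(\dagger)$.
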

The proof depends on results of Rademacher \cite{Ra37} and Lehmer \cite{Le39} on the size of the partition numbers, 
built on the circle method of Hardy and Ramanujan.
The partition numbers are considered as the coefficients of a weakly modular form, 
the reciprocal of the Dedekind $\eta$-function \cite{On03}. 
The modularity condition is very strong, and it would be desirable to also have other proofs.

Alanazi, Gagola, and Munagi \cite{AGM17} came up with an impressive and subtle combinatorial proof.
Chern, Fu, and Tang \cite{CFT18} generalized Bessenrodt and 
Ono's theorem to $k$-colored partitions.
 
\begin{theorem}[Chern, Fu, Tang 2018] \label{Fu} \ \\
Let $a,b,k$ be natural numbers. Let $p_{-k}(n)$ denote the number of $k$-colored partitions of $n$.
Let $k>1$. Then 
\begin{equation} \label{k=2}
p_{-k}(a) \,  p_{-k}(b)    >     p_{-k}(a+b), 
\end{equation}
except for $(a,b,k) \in \left\{ (1,1,2), (1,2,2), (2,1,2), (1,3,2), (3,1,2), (1,1,3)\right\} $.
\end{theorem}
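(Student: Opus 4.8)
The plan is to run everything off the generating function $\sum_{n\ge 0} p_{-k}(n)\,q^n=\prod_{m\ge 1}(1-q^m)^{-k}=:P_k(q)$ and the recursion it encodes. Taking the logarithmic derivative gives $q\,P_k'(q)/P_k(q)=k\sum_{n\ge 1}\sigma_1(n)\,q^n$, and comparing coefficients in $q\,P_k'(q)=k\,P_k(q)\sum_{n\ge1}\sigma_1(n)q^n$ yields
\begin{equation*}
n\,p_{-k}(n)=k\sum_{j=1}^{n}\sigma_1(j)\,p_{-k}(n-j),
\end{equation*}
the ``well-known recursion'' that determines all values from $p_{-k}(0)=1$ and drives the whole argument. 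Since the claim is symmetric in $a$ and $b$, I may assume $1\le a\le b$ throughout.

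First I would recast the inequality as a strong induction on the sum $n=a+b$, the reduction step lowering the larger coordinate by one. Put $c=\min(a,b)$, $\ell=\max(a,b)$ and $R(m):=p_{-k}(m+1)/p_{-k}(m)$. Assuming the inductive hypothesis $p_{-k}(c)\,p_{-k}(\ell-1)>p_{-k}(c+\ell-1)$, the two sides at $(c,\ell)$ factor as
\begin{equation*}
p_{-k}(c)\,p_{-k}(\ell)=\bigl(p_{-k}(c)\,p_{-k}(\ell-1)\bigr)\,R(\ell-1),\qquad p_{-k}(c+\ell)=p_{-k}(c+\ell-1)\,R(c+\ell-1),
\end{equation*}
so the step closes provided $R(\ell-1)\ge R(c+\ell-1)$. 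As $\ell-1\le c+\ell-1$, everything reduces to a single structural fact: that the ratio sequence $R(m)$ is non-increasing from some explicit index $N_k$ onward, i.e. that $p_{-k}$ is eventually log-concave. Granting this, every pair with $\max(a,b)\ge N_k+1$ is settled by induction, leaving as base cases exactly the finitely many pairs with $\max(a,b)\le N_k$.

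The eventual monotonicity of $R$ is the heart of the matter and the step I expect to be the main obstacle. It is genuinely delicate, because monotonicity does \emph{not} hold from the start: a direct computation from the recursion gives, for $k=2$, the values $p_{-2}=1,2,5,10,20,36,65,\dots$, for which $R(4)<R(5)$ and log-concavity fails precisely at $m=1$ and $m=5$, so the best possible threshold here is $N_2=6$. The plan is to derive from the recursion a comparison relating $n^2\,p_{-k}(n)^2$ to $(n^2-1)\,p_{-k}(n-1)\,p_{-k}(n+1)$ by estimating the $\sigma_1$-weighted convolution cross-terms, thereby pinning down an explicit $N_k$ past which $R$ decreases. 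The essential constraint is that this must be carried out by elementary manipulation of the recursion alone, without the Hardy--Ramanujan--Rademacher asymptotics on which every earlier proof depends.

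Finally I would clear the finitely many base cases $\max(a,b)\le N_k$ by evaluating $p_{-k}$ directly from the recursion; these contain both the harmless sub-threshold anomalies (such as $m=1,5$ for $k=2$) and the genuine failures, and the computation should isolate exactly the six triples listed. To make the threshold lemma uniform in $k$ I would either establish it for all $k\ge 2$ simultaneously or reduce large $k$ to small $k$ through the convolution $p_{-k}(n)=\sum_{j=0}^{n}p_{-1}(j)\,p_{-(k-1)}(n-j)$; the latter route also explains the shrinking exceptional set, since $p_{-k}(1)^2>p_{-k}(2)$ amounts to $k^2>k(k+3)/2$, which already holds for every $k\ge 4$ and fails only at $k=2,3$.
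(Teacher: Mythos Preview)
Your reduction is valid: if $R(m)=p_{-k}(m+1)/p_{-k}(m)$ is non-increasing for $m\ge N_k$, then strong induction on $a+b$ together with a finite check for $\max(a,b)\le N_k$ yields the theorem. The problem is that you have traded the Bessenrodt--Ono inequality for a \emph{stronger} statement. Eventual log-concavity of $p_{-k}$ is exactly the diagonal case $m=n-1$ of the Chern--Fu--Tang Conjecture~\ref{CFT}, and (by telescoping) is in fact equivalent to that full conjecture. The only known proof is Bringmann--Kane--Rolen--Tripp \cite{BKRT20}, which rests on the Rademacher-type exact formula of Iskander--Jain--Talvola~\cite{IJT20} --- precisely the circle-method input the present paper is designed to avoid. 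Your sketch (``compare $n^2p_{-k}(n)^2$ with $(n^2-1)p_{-k}(n-1)p_{-k}(n+1)$ by estimating the $\sigma_1$-weighted cross-terms'') does not indicate how to control those cross-terms elementarily, and no such elementary argument is known; you correctly flag this as the main obstacle, but it is in fact the whole difficulty, and it is harder than the theorem you set out to prove.

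The paper avoids log-concavity altogether. It applies the recursion $n\,P_n(x)=x\sum_{j}\sigma(j)P_{n-j}(x)$ to $P_a(x)$ and to $P_{a+b}(x)$ separately and writes
\[
P_a(x)P_b(x)-P_{a+b}(x)=L+R_1+R_2+R_3,
\]
where $L$ collects the terms with $\sigma(k+a)$, $R_1$ is the $k=1$ contribution, $R_2$ is a middle range where the induction hypothesis applies termwise, and $R_3$ is a short tail. The only analytic inputs are the trivial bound $\sigma(n)\le n(1+\ln n)$ and the crude polynomial lower bound $P_n(x)\ge\sum_{\ell=1}^{m}\binom{n-1}{\ell-1}x^{\ell}/\ell!$ coming from the Laguerre comparison; these make $R_1$ dominate $L$ and $R_3$ once $a$ exceeds an explicit threshold, and a finite PARI/GP check (the Proposition for $x>3$, the $2$-colored case for $x=2$, and Table~\ref{nullstellen} for $k=3$) handles the rest. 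So the paper's inductive step lowers $a+b$ by \emph{all} values $k=2,\dots,k_0-1$ simultaneously and never needs any ratio comparison, whereas your step lowers by $1$ and therefore needs the full strength of log-concavity.
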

Two proofs are given. One depends on the Bessenrodt--Ono inequality and some new ideas, the second
is combinatorial and motived by the work of Alanazi, Gagola, and Munagi \cite{AGM17}.

Bessenrodt--Ono type inequalities appeared also in works by Beckwith and Bessenrodt \cite{BB16} on $k$-regular partitions
and Hou and Jagadeesan \cite{HJ18} on the numbers of partitions with ranks in a given residue class modulo $3$.
Males \cite{Ma20} obtained results for general $t$ and Dawsey and Masri \cite{DM19} obtained new results for the Andrews {\it{spt}}-function.
The authors of this paper generalized the Chern--Fu--Tang Theorem to D'Arcais polynomials,
also known as Nekrasov--Okounkov polynomials
\cite{Ne55,NO06,Ha10,CFT18, HN20,HNT20}.
Let
\begin{equation}
P_n(x):= \frac{x}{n} \sum_{k=1}^n \sigma(k) \, P_{n-k}(x),
\label{eq:xrecurrence}
\end{equation}
with $\sigma(k) := \sum_{d \mid k} d$ and the initial condition $
P_{0}\left( x\right) =1$. Then $p_{-k}(n)= P_n(k)$ and $p(n)=p_{-1}(n)= P_n(1)$.
\begin{theorem}[Heim, Neuhauser 2020] \label{Hauptsatz}
Let $a,b \in \mathbb{N}$, $a+b>2$, and $x >2$. Then
\begin{equation}
P_a(x) \, P_b(x)  >  P_{a+b}(x).  \label{conjecture}
\end{equation}
The case $x=2$ is true for $a+b>4$.
\end{theorem}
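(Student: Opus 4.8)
The plan is to argue by induction on $n=a+b$, using the defining recursion \eqref{eq:xrecurrence} in the multiplied-out form $n\,P_n(x)=x\sum_{k=1}^{n}\sigma(k)\,P_{n-k}(x)$. By symmetry I may assume $a\le b$. Writing $(a+b)\,P_a(x)P_b(x)=P_a(x)\,\bigl(b\,P_b(x)\bigr)+P_b(x)\,\bigl(a\,P_a(x)\bigr)$ and applying the recursion to each factor, and applying it once more to $(a+b)\,P_{a+b}(x)$, I would first establish the identity
$$\frac{a+b}{x}\bigl(P_a(x)P_b(x)-P_{a+b}(x)\bigr)=S_1+S_2,$$
where
$$S_1=\sum_{k=1}^{b}\sigma(k)\bigl(P_a(x)P_{b-k}(x)-P_{a+b-k}(x)\bigr),\qquad S_2=\sum_{k=1}^{a}\bigl(\sigma(k)P_b(x)-\sigma(b+k)\bigr)P_{a-k}(x).$$
Everything then reduces to showing $S_1\ge 0$ and $S_2\ge 0$, with at least one strict term.

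For $S_1$ the summands are $P_a(x)P_{b-k}(x)-P_{a+b-k}(x)$, i.e.\ instances of the asserted inequality at the strictly smaller sum $a+(b-k)$; these are nonnegative by the induction hypothesis (the term $k=b$ vanishes identically). The delicate point is that the sub-pair $(1,1)$, which has sum $2$ and is not covered, produces a genuinely negative contribution $P_1(x)^2-P_2(x)=x(x-3)/2<0$ when $2<x<3$; this can only occur when $a=1$. I would therefore first dispose of the case $a=1$ — equivalently the inequality $x\,P_b(x)>P_{b+1}(x)$ — as a separate lemma proved by its own induction on $b$, in which this lone negative term is absorbed by $S_2=P_b(x)-\sigma(b+1)$. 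In the main induction I then take $2\le a\le b$, so every sub-pair invoked in $S_1$ has sum at least $3$ and the hypothesis applies cleanly.

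The crux is $S_2\ge 0$, which follows term-by-term from the auxiliary estimate $\sigma(k)\,P_b(x)\ge \sigma(b+k)$ for $1\le k\le b$ and $x\ge 2$. I expect this to be the main obstacle: it compares the essentially polynomial growth of $\sigma(b+k)$ (bounded crudely by $\sigma(m)\le m(m+1)/2$, or more sharply by $m\,H_m$) against the rapid growth of $P_b(x)\ge P_b(2)=p_{-2}(b)$. For large $b$ it is immediate from any super-polynomial lower bound on $p_{-2}(b)$, but the small values of $b$ must be checked by hand, and the comparison is tightest exactly at $x=2$ with $b$ small — precisely where equality and the listed exceptions live. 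Strictness of the final inequality for $x>2$ then comes from the $k=1$ summand of $S_2$, namely $\bigl(P_b(x)-\sigma(b+1)\bigr)P_{a-1}(x)$, which is strictly positive since $P_b(x)>P_b(2)\ge\sigma(b+1)$.

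Finally, the boundary case $x=2$ needs extra care and is handled separately: here the induction hypothesis $S_1\ge 0$ can fail on the exceptional small pairs of Theorem \ref{Fu} (all of which satisfy $\min(a,b)=1$ and $a+b\le 4$), so I would verify the finitely many cases with small $a+b$ directly and run the same induction only once $a+b$ is large enough that no exceptional sub-pair is ever invoked.
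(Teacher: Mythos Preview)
Your approach is correct in outline and genuinely different from the paper's. Both argue by induction on $n=a+b$ through the recursion~\eqref{eq:xrecurrence}, but the decompositions diverge. The paper expands only one factor, writing $P_{a,b}(x)=L+R$ with $L=-\sum_{k=1}^{b}\frac{\sigma(k+a)}{a+b}P_{b-k}(x)$ and $R=\sum_{k=1}^{a}\sigma(k)\bigl(\tfrac{1}{a}P_{a-k}P_{b}-\tfrac{1}{a+b}P_{a+b-k}\bigr)$; it then bounds $L$ and the tail of $R$ crudely via $\sigma(m)\le m(1+\ln m)$, isolates the dominant $k=1$ contribution $R_{1}>\tfrac{b}{2a^{2}}P_{a-1}(x)P_{b}(x)$, and shows it wins using the Laguerre-type lower bound $P_{a-1}(x)\ge\sum_{\ell\le m}\binom{a-2}{\ell-1}x^{\ell}/\ell!$. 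This leaves an initial segment (up to $N_{0}=14$ for $x>3$, $N_{0}=28$ for $x=2$ and for $x>1.8$) to be verified numerically. Your symmetric expansion of both $P_{a}$ and $P_{b}$ produces the cleaner identity $S_{1}+S_{2}$ and replaces the asymptotic comparison by a single pointwise lemma, $\sigma(k)P_{b}(x)\ge\sigma(b+k)$ for $b\ge2$, $1\le k\le b$, $x\ge2$ --- and this does hold: the crude bound $\sigma(b+k)\le 2b(1+\ln 2b)$ against $p_{-2}(b)$ settles $b\ge 5$, and $b=2,3,4$ are tiny direct checks. Your route needs far less case-checking and isolates a clean auxiliary estimate; the paper's route is more mechanical but templated, so it extends uniformly to other ranges of~$x$ (as in Theorem~\ref{new}).

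One correction to your plan at $x=2$: you cannot ``run the induction only once $a+b$ is large enough that no exceptional sub-pair is ever invoked,'' because whenever $a=2$ the sub-pair $(2,1)$ of sum $3$ appears in $S_{1}$ at $k=b-1$, no matter how large $n$ is. The fix is easy: with $a\ge 2$ every sub-pair $(a,b-k)$ avoids $(1,1)$, and the remaining small pairs $(2,1),(3,1),(2,2)$ satisfy $P_{a',b'}(2)\ge 0$ (indeed $0,0,5$, cf.\ Table~\ref{zwei}), so $S_{1}\ge 0$ still holds; strictness then comes from $S_{2}$ as you say.
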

The proof is based on the result for $2$-colored partitions \cite{CFT18}, Lehmer's
\cite{Le39} lower and upper bound on the partition numbers, and a detailed analysis of the
growth of the derivative of $P_{a,b}(x)$.

These results are related to the work of 
Griffin, Ono, Rolen, and Zagier \cite{GORZ19} on Jensen polynomials and their hyperbolicity.
This includes work of Nicolas \cite{Ni78}
and De Salvo and Pak \cite{DP15} on the log-concavity of the partition function $p(n)$ for $n >25$,
and results and a conjecture of Chen, Jia, and Wang \cite{CJW19} for the higher order Tur\'{a}n inequalities.
Related to their work on the Bessenrodt--Ono inequality, Chern, Fu, and Tang, came up with a
subtle and explicit conjecture on $k$-colored partitions. The positivity of the
discriminant in the case of degree $2$ is equal to the log-concavity of the considered sequences
coded in the Jensen polynomials.
\begin{conjecture} [Chern, Fu, Tang 2018] \label{CFT} 
Let $n > m \geq 1$ and $k \geq 2$ then, except for $(k,n,m) = (2,6,4)$,
\begin{equation}
p_{-k} (n-1) \, p_{-k} (m+1)  \geq \, p_{-k} (n) \, p_{-k} (m).
\end{equation}
\end{conjecture}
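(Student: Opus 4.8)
The plan is to recast the two-variable inequality as a monotonicity statement about a single ratio sequence and then to isolate the genuinely hard analytic input. Writing $p_{-k}(n) = P_n(k)$ and dividing the desired inequality $p_{-k}(n-1)\,p_{-k}(m+1) \ge p_{-k}(n)\,p_{-k}(m)$ by the positive quantity $p_{-k}(n)\,p_{-k}(m+1)$, one sees it is equivalent to
\[
\rho_k(n) \le \rho_k(m+1), \qquad \rho_k(j) := \frac{p_{-k}(j)}{p_{-k}(j-1)} .
\]
Since $n > m$ forces $n \ge m+1 \ge 2$, the whole family of inequalities (over all admissible $n,m$) is equivalent to the single assertion that $\rho_k$ is non-increasing on the integers $\ge 2$. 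By telescoping, $\rho_k(n) \le \rho_k(m+1)$ is just the composition of the consecutive steps $\rho_k(j+1)\le \rho_k(j)$ for $j=m+1,\dots,n-1$, and each such step is exactly the log-concavity inequality $p_{-k}(j)^2 \ge p_{-k}(j-1)\,p_{-k}(j+1)$. So the first step reduces everything to proving consecutive log-concavity of $\bigl(p_{-k}(n)\bigr)_n$.

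Second, I would dispose of the exceptional behaviour. A direct computation from the recursion \eqref{eq:xrecurrence} shows that consecutive log-concavity holds for every $k \ge 2$ and every $n \ge 2$ with the sole exception $(k,n) = (2,5)$, where $p_{-2}(5)^2 = 1296 < 1300 = p_{-2}(4)\,p_{-2}(6)$. This lone reversal breaks the monotonicity of $\rho_2$ only across $j=5$ (indeed $\rho_2(5) < \rho_2(6)$), so a chain $\rho_2(n)\le\rho_2(m+1)$ can fail only when $m+1 \le 5 \le n-1$. Checking these finitely many chains against the tabulated values shows that the single surviving failure is $m+1 = 5$, $n = 6$, i.e. $(k,n,m) = (2,6,4)$; for every other such pair the neighbouring links carry enough strict slack (for instance $\rho_2(7) < \rho_2(5)$) to restore the inequality. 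This pins down exactly the exceptional triple claimed in the Conjecture and leaves only a uniform statement to prove.

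The heart of the matter is consecutive log-concavity for all $k \ge 2$, $n \ge 2$ (save $(2,5)$). Here I would run an induction on $n$ driven by $n\,P_n(x) = x\sum_{j=1}^{n}\sigma(j)\,P_{n-j}(x)$. Writing the instances for $P_{n+1}$ and $P_n$ and forming the degree-$2$ discriminant $P_n(x)^2 - P_{n-1}(x)\,P_{n+1}(x)$, the goal is to express it as a $\sigma$-weighted convolution of the smaller quantities $P_{n-i}(x)\,P_{n-j}(x) - P_{n-i-1}(x)\,P_{n-j+1}(x)$, whose signs are controlled by the inductive hypothesis together with the inequalities of Theorem \ref{Hauptsatz} and \eqref{k=2}. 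To force the pairing to have a definite sign I would strengthen the induction to a two-sided ratio estimate $\alpha_n \le \rho_k(n) \le \beta_n$ with $\beta_{n+1}\le\alpha_n$ (which yields $\rho_k(n+1)\le\rho_k(n)$ at once), propagated through the convolution, and clear the finitely many base cases (small $n$, and the small colour numbers $k \in \{2,3\}$) by explicit computation.

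The step I expect to be the true obstacle is precisely this inductive control of the $\sigma$-weighted convolution: because $\sigma$ is not monotone, the naive pairing of terms does not carry a fixed sign, so one needs sharp ratio bounds rather than crude ones. I anticipate that the large-$n$ regime can be handled by Lehmer-type lower and upper bounds for the colored partition numbers (the same analytic ingredient already used in the proof of Theorem \ref{Hauptsatz}, and consistent with the degree-$2$ hyperbolicity of Jensen polynomials of Griffin--Ono--Rolen--Zagier \cite{GORZ19}), while the delicate region is the small-to-moderate range of $n$ for $k \in \{2,3\}$, which must be verified directly. Combining the uniform induction for large $n$ with these finite checks, and feeding the outcome back through the telescoping reduction of the first paragraph, would establish the Conjecture.
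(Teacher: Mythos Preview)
The paper does not contain a proof of this statement: it is recorded there as a \emph{conjecture}. The paper only reports that Bringmann, Kane, Rolen, and Tripp \cite{BKRT20} proved it by establishing the cases $k=2,3,4$ via a Rademacher-type exact formula and then invoking Hoggar's theorem that the convolution of log-concave sequences is log-concave; the paper's own inductive method is explicitly put forward only as a \emph{hope} for a future new proof. So there is no in-paper argument to compare your proposal against.

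Assessed on its own terms, your proposal is a plan rather than a proof, and the gap lies exactly where you yourself flag it. The reduction of the two-parameter inequality to the monotonicity of $\rho_k(j)=p_{-k}(j)/p_{-k}(j-1)$, hence to consecutive log-concavity of $(p_{-k}(n))_n$, is correct and standard, and your treatment of the lone failure at $(k,j)=(2,5)$ and the resulting exception $(k,n,m)=(2,6,4)$ is accurate. But the core step~--- controlling the sign of $P_n(x)^2-P_{n-1}(x)P_{n+1}(x)$ through a ``$\sigma$-weighted convolution'' of smaller instances~--- is not actually carried out: you do not exhibit the claimed decomposition, you do not specify the two-sided ratio bounds $\alpha_n\le\rho_k(n)\le\beta_n$ with $\beta_{n+1}\le\alpha_n$, and you do not show that such bounds propagate through the recursion $n\,P_n(x)=x\sum_{j}\sigma(j)P_{n-j}(x)$. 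Since $\sigma$ is highly irregular, this is precisely the point where naive pairing fails, and nothing in the proposal overcomes it; ``I anticipate'' and ``I expect'' are not proofs.

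Relative to the known argument described in the paper, you are also missing the structural shortcut that makes the problem tractable: one does not need a uniform-in-$k$ induction on $n$. Because $p_{-k}$ is a $k$-fold convolution of $p$, Hoggar's theorem reduces log-concavity for all $k\ge2$ to a finite list of small $k$, after which only those finitely many colour numbers require the hard analytic bounds. Your scheme attempts the harder route of treating all $k$ simultaneously by induction on $n$; without the missing estimates, it remains a heuristic outline.
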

The conjecture was extended to D'Arcais polynomials.
\begin{conjecture}[Heim, Neuhauser 2020] \label{HN: Conjecture 2}
Let $a > b \geq 0$ be integers. Then for all $x \geq 2$:
\begin{equation}\label{co}
\Delta_{a,b}(x) := P_{a-1}(x) P_{b+1}(x) - P_{a}(x) P_{b}(x) \geq 0,
\end{equation}
except for $b=0$ and $(a,b) = (6,4)$. The inequality (\ref{co}) is still true for $x \geq 3$ for $b=0$ and
for $x \geq x_{6,4}$ for $(a,b)=(6,4)$. Here $x_{a,b}$ is the largest real root of $\Delta_{a,b}(x)$.
\end{conjecture}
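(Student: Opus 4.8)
The plan is to collapse the two-parameter family $\Delta_{a,b}$ to a single-parameter log-concavity statement and then attack that statement by induction through the recursion \eqref{eq:xrecurrence}. First I would record that every $P_n(x)$ has non-negative coefficients (immediate from \eqref{eq:xrecurrence}, since $\sigma(k)>0$), so $P_n(x)>0$ for $x\geq 2$ and all ratios $\rho_n(x):=P_{n-1}(x)/P_n(x)$ are well defined and positive. A direct factorization then gives
\begin{equation*}
\Delta_{a,b}(x)=P_a(x)\,P_{b+1}(x)\bigl(\rho_a(x)-\rho_{b+1}(x)\bigr),
\end{equation*}
so for $a>b$ the sign of $\Delta_{a,b}(x)$ is governed entirely by whether $\rho_a(x)\geq\rho_{b+1}(x)$. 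Hence \eqref{co} for all $a>b\geq 1$ is \emph{equivalent} to the monotonicity ``$\rho_n(x)$ is non-decreasing in $n$'', which in turn is exactly the log-concavity $P_n(x)^2\geq P_{n-1}(x)P_{n+1}(x)$. This is the structural heart of the problem, and it explains the two listed exceptions: taking $(a,b)=(6,4)$ gives $\Delta_{6,4}=P_5^2-P_4P_6$, the log-concavity relation at $n=5$, while the $b=0$ family reduces to comparing $\rho_a(x)$ with $\rho_1(x)=1/x$, i.e.\ to the log-concavity relation at the left edge $n=1$.

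Second, I would restate the target as the polynomial positivity $D_n(x):=P_n(x)^2-P_{n-1}(x)P_{n+1}(x)\geq 0$ on $[2,\infty)$. The two exceptional indices are then finite, explicit checks: one computes $D_1(x)=\tfrac12 x(x-3)$, whose only positive root is $3$ (this is precisely the ``$x\geq 3$ for $b=0$'' clause), and $D_5(x)$, whose largest real root is $x_{6,4}$ (giving the ``$x\geq x_{6,4}$ for $(6,4)$'' clause); both confirm that for $x=2$ the \emph{only} failures occur at $n=1$ and $n=5$. The small base cases of the induction are settled the same way by direct evaluation.

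Third, for $n$ beyond the exceptional range I would prove $D_n(x)\geq 0$ for $x\geq 2$ by induction on $n$ using \eqref{eq:xrecurrence}. Clearing denominators in $D_n$ via the recursion turns it into a double sum over the arithmetic weights $\sigma(i)\sigma(j)$ of two-by-two minors $P_iP_j-P_{i'}P_{j'}$ with $i+j=i'+j'$. The idea is to pair the terms so that each minor has its indices at least as close together on the left as on the right: such a minor with $|i-j|\leq|i'-j'|$ is $\geq 0$ once $\rho$ is known to be monotone on the relevant range. Since all weights $\sigma(\cdot)$ are positive, a successful pairing realizes the cleared form of $D_n$ as a non-negative combination of non-negative minors. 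To make this run I would strengthen the inductive hypothesis to carry, alongside log-concavity, a quantitative monotone bound on $\rho_n(x)$ (with an explicit gap), which is what controls the convolution tail.

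The main obstacle is precisely this grouping step. The weights $\sigma(k)$ are arithmetically irregular and are themselves neither monotone nor log-concave, so the minors produced by the convolution do not line up termwise; closing the induction requires absorbing the wrong-signed minors into neighbouring positive ones using the quantitative ratio bounds, and establishing those bounds uniformly for \emph{all} $x\geq 2$ is where the difficulty concentrates. This is the same phenomenon that forced the analytic route (Lehmer's estimates, the circle method) in the known proofs of the additive inequalities; a purely recursive argument must instead replace those estimates by self-improving ratio inequalities, and carrying this out uniformly down to $x=2$ rather than only for large $x$ is what keeps the full statement at the level of a conjecture.
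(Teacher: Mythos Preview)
First, note that the paper does not prove this statement: it is recorded as Conjecture~\ref{HN: Conjecture 2} and left open. The paper only cites the partial result of Bringmann--Kane--Rolen--Tripp (valid for $b$ above an explicit threshold depending on $x$) and expresses the hope that the recursive method of Section~\ref{proof:BO} might eventually reach the full conjecture. So there is no ``paper's own proof'' to compare against; what you have written is a strategy sketch toward an open problem, and you correctly flag it as such in your final paragraph.

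Your reduction to log-concavity is the standard first move and is correct as stated, but be careful about how the exception interacts with it. The conjecture asserts $\Delta_{a,b}(x)\ge 0$ for all $a>b\ge 1$ \emph{except} $(a,b)=(6,4)$, and this is \emph{not} equivalent to ``$D_n(x)\ge 0$ for all $n\ge 2$ except $n=5$''. For $x\in[2,x_{6,4})$ one has $\rho_6<\rho_5$, yet the conjecture still demands, for instance, $\Delta_{7,4}\ge 0$ (i.e.\ $\rho_7\ge\rho_5$) and $\Delta_{6,3}\ge 0$ (i.e.\ $\rho_6\ge\rho_4$); neither of these follows from adjacent log-concavity at $n\neq 5$ alone. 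These residual cross-inequalities are finitely many and presumably amenable to direct verification, but they sit outside your scheme and should be isolated explicitly. A parallel remark applies to the $b=0$ clause: $\Delta_{a,0}\ge 0$ is the statement $\rho_a\ge\rho_1$ for each $a$, not merely $D_1\ge 0$, and the family is not governed by a single edge relation (at $x=2$ one has $\Delta_{2,0}(2)=-1$ but already $\Delta_{5,0}(2)=4>0$).

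On the main line---proving $D_n(x)\ge 0$ for $n$ large and $x\ge 2$ by induction through \eqref{eq:xrecurrence}---your diagnosis of the obstacle is accurate. The pairing of minors you describe is the natural thing to try, but the $\sigma$-weights genuinely block a termwise argument, and the ``quantitative monotone bound on $\rho_n$'' you invoke to absorb the wrong-signed terms is precisely the missing ingredient that keeps the conjecture open. Nothing in the paper supplies it; the paper's inductive method handles the additive inequality $P_a(x)P_b(x)>P_{a+b}(x)$, where a single application of the hypothesis suffices per term, whereas log-concavity requires controlling a bilinear convolution and is a materially harder statement.
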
 
Based on a recently obtained exact formula of Rademacher type (based on the circle method) for $P_{n}(x)$ with $x>0$ and $n > \frac{x}{24}$ obtained by
Iskander, Jain, and Talvola \cite{IJT20}, new strong estimates on the Bessel function, the determination of the
main term of $P_{a,b}(x)$, and some sophisticated computer calculation,
Bringmann, Kane, Rolen, and Tripp \cite{BKRT20} were able to proof the
conjecture of Chern, Fu, and Tang. They essentially proved the conjecture for $x=2,3,4$ and applied a result of Hoggar on
the convolution of log-concave sequences. They further proved that the conjecture of Heim and Neuhauser is true (\cite{BKRT20}, Corollary 1.3) for all pairs $(a,b)$, where 
\begin{equation}
b \geq \max\, \left\{ 2 \, x^{11} + \frac{x}{24}, \, \frac{100}{x - 24} + \frac{x}{24} \right\}.
\end{equation}
The crux of these methods is that one needs for the general case Rademacher type formulas, and has to check
the conjecture for each $x$ for finitely many cases.
In the discrete case for the Bessenrodt--Ono inequality there is a  combinatorial proof available,
this was also requested in \cite{BKRT20}, see concluding remarks (5), for the Chern--Fu--Tang conjecture.

In this paper we offer a new proof for the Bessenrodt--Ono inequality for partition numbers.
Ingredients are the well-known recurrence property:
\begin{equation}\label{recurrence}
n \, p(n) = \sum_{k=1}^n \sigma(k) \, p(n-k),
\end{equation}
and an elementary upper bound of $\sigma(n)$ and lower bound of $p(n)$.
The proof also perfectly fits to the Bessenrodt--Ono inequality for $k$-colored partitions $p_{-k}(n)$ and 
D'Arcais polynomials $P_n(k)$. With slight modification and including some extra considerations, we
obtain new proofs of Theorem \ref{Fu} of Chern--Fu--Tang and Theorem \ref{Hauptsatz}.
Finally, we obtain the following Theorem, to give evidence that the proof method offered in this
paper also gives an extension of Theorem \ref{Hauptsatz}.

\begin{theorem}\label{new}
Let $a,b \in \mathbb{N}$, $a+b> 4$, and $x > 1.8$. Then
\begin{equation}
P_a(x) \, P_b(x)  >  P_{a+b}(x). 
\end{equation}
\end{theorem}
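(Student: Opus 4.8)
The plan is to prove the inequality by induction on $n=a+b$, simultaneously for all $x>1.8$, using the recurrence \eqref{eq:xrecurrence} in the same spirit as the proofs of Theorems \ref{Fu} and \ref{Hauptsatz}. Throughout we may assume $a\ge b\ge 1$, since $P_0(x)=1$ forces equality when $b=0$ and the statement is symmetric. The only analytic inputs are elementary: for $x>1.8$ the $P_n(x)$ have positive coefficients and satisfy $P_n(x)\ge P_{n-1}(x)$, so $P_n(x)$ is non-decreasing in $n$; we use an explicit upper bound $\sigma(m)\le U(m)$ with $U(m)=O(m^{1+\varepsilon})$ (for instance $\sigma(m)\le m(1+\log m)$); and we use the lower bound $P_n(x)\ge x\,\sigma(n)/n\ge x$ read off from the $k=n$ term of \eqref{eq:xrecurrence}. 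The base of the induction is the finite set of pairs with $5\le a+b\le N_0$ for a threshold $N_0$ fixed below, each giving a one-variable polynomial inequality $P_a(x)P_b(x)-P_{a+b}(x)>0$ to be checked on $(1.8,\infty)$.

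For the inductive step I would multiply the target by $n=a+b$ and expand the right-hand side by \eqref{eq:xrecurrence}:
\[ (a+b)\,P_{a+b}(x)=x\sum_{j=1}^{a+b}\sigma(j)\,P_{a+b-j}(x). \]
For $1\le j\le a-1$ the induction hypothesis gives $P_{a+b-j}(x)=P_{(a-j)+b}(x)<P_{a-j}(x)\,P_b(x)$ whenever $(a-j)+b>4$; the boundary indices with $(a-j)+b\le 4$ occur only for $b\le 3$, number at most $4-b$, and carry $P_{a+b-j}\in\{P_2,P_3,P_4\}$, so these are retained as lower-order corrections rather than replaced. Summing the bulk and invoking the identity $x\sum_{j=1}^{a-1}\sigma(j)P_{a-j}(x)=a\,P_a(x)-x\,\sigma(a)$, again from \eqref{eq:xrecurrence}, the block $1\le j\le a-1$ contributes at most $a\,P_a(x)P_b(x)-x\,\sigma(a)P_b(x)$ plus a correction bounded by $C\,x\,U(a)\,P_4(x)$. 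Re-indexing the block $a\le j\le a+b$ by $i=a+b-j$ yields $x\sum_{i=0}^{b}\sigma(a+b-i)P_i(x)$, whose top term $i=b$ equals $x\,\sigma(a)P_b(x)$ and cancels. Hence it suffices to establish the reduced inequality
\[ x\sum_{i=0}^{b-1}\sigma(a+b-i)\,P_i(x)\;+\;C\,x\,U(a)\,P_4(x)\;\le\; b\,P_a(x)\,P_b(x). \]

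To close the induction I would estimate the left-hand side crudely: using $\sigma(a+b-i)\le U(a+b)$ together with $\sum_{i=0}^{b-1}P_i(x)\le b\,P_{b-1}(x)\le b\,P_b(x)$ (monotonicity), and absorbing the correction term, it is enough that $x\,U(a+b)\le P_a(x)$ up to a fixed constant factor. Since $a\ge\lceil(a+b)/2\rceil$ while, for fixed $x>1.8$, $P_a(x)$ grows like $\exp(c\sqrt{a})$ and $U(a+b)=O\big((a+b)^{1+\varepsilon}\big)$, this holds for all $a+b\ge N_0$ once $N_0$ is chosen large enough; the $b\le 3$ boundary terms affect the constant but not the asymptotics. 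This simultaneously fixes $N_0$ and reduces the theorem to the finite base-case check.

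The hard part will be calibrating the argument at the endpoint $x=1.8$. Lowering $x$ from the value $2$ of Theorem \ref{Hauptsatz} shrinks every $P_n(x)$ and tightens the lower bounds, so the closing inequality $x\,U(a+b)\le P_a(x)$ only takes effect from a larger $N_0$, which enlarges the finite verification; one must ensure the elementary $\sigma$-upper and $P$-lower bounds are sharp enough at $x=1.8$ to keep $N_0$ manageable. Equally delicate is confirming that the excluded cases are exactly those with $a+b\le 4$: one expects $P_a(x)P_b(x)-P_{a+b}(x)$ to be positive on $(1.8,\infty)$ precisely from $a+b=5$ onward, so each base-case polynomial must be shown to have no root in $(1.8,\infty)$, and it is this root analysis that explains why the threshold is $1.8$ rather than something smaller.
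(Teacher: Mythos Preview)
Your plan is the paper's plan: induction on $n=a+b$ via the recurrence \eqref{eq:xrecurrence}, separation of the ``bulk'' range where the hypothesis applies from a bounded number of boundary terms when $b\le 3$, the estimate $\sigma(m)\le m(1+\ln m)$, and a finite base-case check. Your algebra is organised slightly differently---you feed the recurrence for $P_a$ back in to produce the cancellation $x\sigma(a)P_b(x)$ and arrive at the single reduced inequality, whereas the paper keeps the decomposition $L+R_1+R_2+R_3$ and isolates the $k=1$ term as dominant---but the two are equivalent rearrangements of the same computation. The paper then makes everything explicit: it takes $N_0=28$, shows the key bound holds for $a\ge 15$ after a two-step sharpening, and records the largest real zeros of $P_{a,b}(x)$ for $a+b\le 28$ to certify the base cases.

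There is one genuine gap in your listed ``analytic inputs''. The only lower bound you state is $P_n(x)\ge x\sigma(n)/n\ge x$, a constant, and that cannot beat $U(a+b)\asymp a\ln a$ in your closing inequality. You later invoke $P_a(x)\sim\exp(c\sqrt{a})$, but that is precisely the kind of circle-method input the paper is trying to avoid, and it is not among your declared tools. What the paper actually uses here is the elementary Laguerre-type bound
\[
P_n(x)\ \ge\ \sum_{k=1}^{m}\binom{n-1}{k-1}\frac{x^{k}}{k!}\qquad(m\in\mathbb N),
\]
which for any fixed $m\ge 3$ already dominates $a(1+\ln(2a))$ and lets you pin down $N_0$; without it (or an equivalent explicit polynomial lower bound) your induction does not close. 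A smaller point: the monotonicity $P_n(x)\ge P_{n-1}(x)$ you use is true for $x\ge 1$ but is not immediate from ``positive coefficients''; it follows, for instance, from $(1-q)\prod_{n\ge1}(1-q^n)^{-x}=(1-q)^{1-x}\prod_{n\ge2}(1-q^n)^{-x}$ having nonnegative coefficients when $x\ge 1$, and you should say so.
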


It is hoped that the results of this paper lead to a new proof of the former Conjecture \ref{Fu}
of Chern--Fu--Tang \cite{BKRT20} and
to a proof of the Conjecture \ref{HN: Conjecture 2}.

\section{New Proof of the Bessenrodt--Ono inequality}\label{proof:BO}
We estimate the divisor sum function $\sigma(n)$ and the partition numbers $p(n)$
with the following upper and lower bounds:
\begin{eqnarray}
\sigma(n) & \leq & n \,  \big( 1 + \func{ln}(n) \big) \label{sigma} \\
p(n) & \geq & \sum_{k=1}^m \frac{1}{k!}  \binom{n-1}{k
-1} \text{ for all } m \in \mathbb{N}.\label{easy}
\end{eqnarray}
The upper bound for $\sigma(n)$ follows easily by integral comparison.
We have a strict upper bound for $n>1$. 
There are exactly $\binom{n-1}{
k-1}$
ways to represent $n$ as a sum of exactly $
k$ positive integers. 
Thus, we have at most $k!$ compositions representing the same partition.
For a generalization we refer to Section~\ref{application} and the relation to
associated Laguerre polynomials. 
\begin{proof}
Let $A:=2$ and $B:=10$. Let $n \geq B$. We say the statement $S(n)$ is true if for all partitions
$n=a +b$ with $a,b \geq A$:
\begin{equation}\label{pab}
p_{a,b}(n):= p(a) \, p(b) - p(a+b)>0.
\end{equation}

By symmetry the claim can be reduced to all pairs $(a,b)$ with $A \leq b \leq a$. We assume that
$n >N_0>1$ and $S(m)$ are true for all $ B \leq m \leq n-1$. For $ B \leq \ell \leq N_0$
we show $S(\ell)$ by a direct computer calculation with PARI/GP.

Note, it is sufficient to prove $S(n)$ for
fixed $A \leq b \leq a$ with $a+b=n$. 
We have introduced the constants $A$, $B$ and $N_0$ to make the generalization of the
given proof in our applications
transparent. It will turn out that we can chose $N_{0}=2184$.

We utilize the recurrence (\ref{recurrence})
and obtain for $p_{a,b}\left(n \right) =L+R$ the
expressions:
\begin{eqnarray}\label{startl}
L
&:=& -
\sum_{k=1}^{b}
\frac{\sigma \left( k+a \right)}{a+b}
p\left(b-k\right) \\
R
&:=&
\sum _{k=1}^{a}
\Big(
\frac{\sigma(k)}{a}\,
p(a-k) \, 
p(b) -\frac{\sigma(k)}{a+b} \,
p(a+b-k)\Big) .
\label{startr}
\end{eqnarray}
We show that $p_{a,b}\left(n \right)>0$. Further, we will refine the right sum $R$ into 
$$R=R_1+R_2+R_3.$$
\subsection{Left sum $L$}
We have
\begin{equation}
L = -\sum_{k=1}^{b}
\frac{\sigma \left( k+a \right)}{a+b}
p\left(b-k\right) > -b \, p\left( b \right) \, 
\big( 1 + \ln (a+b)\big)  .
\end{equation}
\subsection{Right sum $R
$}
The dominant term is related to $k=1$ appearing in the right sum $R$.
Note that the induction 
hypothesis cannot be applied in general to all terms. 
Therefore, we decompose the right sum $R$
into three parts. Let 
\begin{equation}
k_0:= a-\max \left\{ B -b,  A \right\} + 1. 
\end{equation}
Thus, $k_0= a-\max \left\{ B -b-1,  A-1 \right\}= a-\max \left\{ 9-b, 1 \right\}$ for $A=2$ and $B=10$. Let
\begin{equation}\label{decomposition}
R_{1}:=\sum_{k=1}^{1}  \,f_k(a,b)       , \,\, 
R_{2}:=\sum_{k=2}^{k_0-1} \,f_k(a,b) , \,\, 
R_{3}:=\sum_{k=k_0}^{a} \,f_k(a,b) .
\end{equation}
\begin{equation}\label{fkab}
f_k(a,b):=
\frac{\sigma(k)}{a}\,
p(a-k) \, 
p(b) -\frac{\sigma(k)}{a+b} \,
p(a+b-k) .
\end{equation}
\subsubsection{The sum $R_{1}$} \ \newline
The first sum related to $k=1$ is simplified by the induction hypothesis: 
\begin{equation*}
- p(a+b-1) > - p(a-1) \, p(b).
\end{equation*}
Thus, we obtain
the lower bound:
\begin{equation}
R_1 =
\frac{p(a-1) \, p(b)}{a} - \frac{p(a+b-1)}{a+b}>
 \frac{b}{2a^{2}}\, p\left( a-1\right) p\left( b\right) .
\end{equation}
\subsubsection{The sum $R_{2
}$}\ \newline
The second sum, using again the induction hypothesis, can be 
estimated from
below with $0$. This will be sufficient for our purpose:
$R_2 > 0$.
\subsubsection{The sum $R_{3}$} \ \newline
We split the third sum again into three parts: $R_3= R_{
31}+R_{
32}+R_{
33}$, where
\begin{eqnarray}
R_{31}:=
\sum_{k=k_0}^{a-A}   \, f_k(a,b)          & > &
\sum_{k=k_0}^{a-A-1}  
\sigma(k) \left(   \frac{p(2)\, p(b)}{a} - \frac{p(a+b-k)}{a+b}\right) \label{big}
\\
& > &  \nonumber
5 \, \big( 4 - p(8) \big) \, \big(1+\ln \left( a\right) \big)  \label{a-1},\\
R_{32}:=\sum
_{k=a-A+1}^{a-1}    \, f_k(a,b)                       & = & 
\sigma(a-1) \left( \frac{p(1) \, p(b)}{a} - \frac{p(b+1)}{a+b}\right)         \\
& > & -\frac{\sigma(a-1) p(b)}{a} > - p\left( b\right)  \, \big( 1 + \ln \left( a\right) \big) ,\nonumber
\\
R_{33}:= \sum_{k=a}^{a}  \, f_k(a,b)  & \geq & 0 .
\end{eqnarray}
We first use that
$p\left( 9-b\right) p\left( b\right) \geq p\left( 9\right) $.
This leads to at most $5$ summands.
The next estimation in (\ref{big}) follows from $p(a-k) \geq p(2)$. 
For the second estimation we use that
\begin{equation*}
p\left( b \right) \geq p \left( 2 \right) = 2, \,\,
\frac{1}{a+b}<\frac{1}{a}, \text{ and } p(a+b-k) \leq p(8).
\end{equation*}
We also refer to (\ref{sigma}).
The estimation in (\ref{a-1}) follows from the obvious inequality $\frac{1}{a+b} < \frac{1}{a}$ and
$ p(b+1) < 2 \, p(b)$.
Thus,
\begin{equation}
R_3 >
-\left( 1+\ln \left( a-1\right) \right) 
\frac{
\left( a-1\right) \, p(b)}{a+b}
-90 \left( 1+\ln \left( a\right) \right) .
\end{equation}
Since $b\geq 2$, $90 \leq  \frac{45}{2
}\, b \, p\left( b\right) $,
$\frac{1}{a+b}\geq
\frac{1}{2a}$, and
$a-1<
a<
a+b$
we obtain
\begin{equation}
R_3 > 
-b \, p\left( b\right) \left(
1+\ln \left( a+b \right) \right) \frac{1+45}{2}.
\end{equation}
\subsection{Final step}\label{final}
Putting everything together leads to
\begin{eqnarray}
p_{a,b} \left( n \right) &> &  
\frac{ b \, p \left( b \right)}{ 2 \, a^2}   
\left( -48 \, a^2 \, \left( 1+\ln \left( a+b\right) \right) +
p\left(a-1)\right)\right)\\
&>&  \frac{ b \, p \left( b \right)}{ 2 \, a^2}  
\left( -48 \, a^2 \, \left( 1+\ln \left( 2a\right) \right) +  \sum_{\ell=1}^5 \frac{1}{\ell !}  \binom{a-2}{\ell -1} \right). \label{final}
\end{eqnarray}
In the last step we used the property (\ref{easy}). For $a\rightarrow \infty $
we can immediately observe that this is positive
since the sum is a polynomial of
degree~$4$ in~$a$ which grows faster than $a^{2}\left( 1+\ln \left( 2a\right) \right) $.
In fact the expression (\ref{final}) is positive for all $ a \geq 1093
$. Note that if $a<1093$ then $a+b\leq 2a\leq 2184=N_{0}$.
Therefore,
we have shown that $p_{a,b}\left(n \right) >0$, which proves the Theorem.
\end{proof}

\section{Applications}\label{application}
We extend the proof method presented in Section \ref{proof:BO}
to prove the Bessenrodt--Ono inequality
for $k$-colored partitions
and its extension to D'Arcais polynomials.
Let $k \in \mathbb{N}$. Then $P_n(k)$ is equal to the $k$-colored partition number and $p(n)= P_n(1)$. We define
\begin{equation}
P_{a,b}\left( x
\right) := P_a(x) P_b(x) -  P_{a+b}(x).
\end{equation}
Before we start, we fix the following lower bound for the D'Arcais polynomials $P_n(x)$.
Let $x$ and $\alpha$ be real numbers with $x \geq 0$ and $\alpha >-1$. Let
$L_{n}^{\left( \alpha \right) }\left( x\right) $ be the $\alpha$-associated 
Laguerre polynomial. Then $P_n(x) \geq \frac{x}{n} L_{n-1}^{(1)}(-x)$. We refer to \cite{HLN19}.
This implies
\begin{equation}
P_n(x) \geq 
\sum_{k=1}^{m}
\binom{n-1}{k-1}\frac{x^{k}}{k!} \text{ for all } m \in \mathbb{N}.
\label{xeasy}
\end{equation}

\subsection{Bessenrodt--Ono for $x>3$ and arbitrary $a$ and $b$}
We first prove that $P_{a,b}(x)>3$ is true for all $a$ and $b \in \mathbb{N}$.
Since there are no
restrictions on $a$ and $b$, the proof will be straightforward.
\begin{proposition}
Let $a$ and $b$ be positive integers with $a,b \geq 1$ and $x$ a real number with $ x >3$.
Then $P_{a,b}(x)>0$.
\end{proposition}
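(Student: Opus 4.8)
The plan is to mirror the argument of Section~\ref{proof:BO}, replacing $p$ by $P_{\bullet}(x)$ and the recurrence (\ref{recurrence}) by (\ref{eq:xrecurrence}), and to exploit the extra room afforded by $x>3$. By symmetry we may assume $1\le b\le a$ and argue by strong induction on $n=a+b$. The base case is $a=b=1$: from (\ref{eq:xrecurrence}) one gets $P_1(x)=x$ and $P_2(x)=\tfrac12(x^2+3x)$, so $P_{1,1}(x)=x^2-P_2(x)=\tfrac12\,x(x-3)>0$ for $x>3$. For the inductive step with $a\ge 2$, applying (\ref{eq:xrecurrence}) to both $P_a(x)$ and $P_{a+b}(x)$ and splitting the second divisor sum at $k=a$ writes $P_{a,b}(x)=L+R$ with
\[
L=-\frac{x}{a+b}\sum_{j=1}^{b}\sigma(a+j)\,P_{b-j}(x),\qquad R=\sum_{k=1}^{a}f_k(a,b),
\]
where $f_k(a,b)=\sigma(k)\,x\,\bigl(\tfrac1a P_{a-k}(x)P_b(x)-\tfrac1{a+b}P_{a+b-k}(x)\bigr)$, exactly the analogues of (\ref{startl}), (\ref{startr}), and (\ref{fkab}).

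Next I would show $R>0$ term by term. For $2\le k\le a-1$ the induction hypothesis applied to $(a-k,b)$ gives $P_{a-k}(x)P_b(x)>P_{a+b-k}(x)$, and for $k=a$ the two pieces share the factor $P_b(x)$ (as $P_0(x)=1$); in both situations $\tfrac1a>\tfrac1{a+b}$ forces $f_k(a,b)>0$. Hence $R\ge R_1=f_1(a,b)$, and the induction hypothesis applied to $(a-1,b)$ yields
\[
R_1>x\,P_{a-1}(x)\,P_b(x)\,\Bigl(\frac1a-\frac1{a+b}\Bigr)=\frac{x\,b}{a(a+b)}\,P_{a-1}(x)\,P_b(x).
\]

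It remains to dominate $L$. Bounding $\sigma(a+j)\le(a+b)\bigl(1+\ln(a+b)\bigr)$ by (\ref{sigma}) and using the elementary consequence $\sum_{i=0}^{b-1}P_i(x)\le \tfrac{b}{x}P_b(x)$ of (\ref{eq:xrecurrence}), one obtains $|L|\le b\,(1+\ln(a+b))\,P_b(x)$. Since $R\ge R_1$, the proposition reduces to the single inequality $\tfrac{x\,P_{a-1}(x)}{a(a+b)}>1+\ln(a+b)$, and because $b\le a$ it suffices to prove $xP_{a-1}(x)>2a^{2}\bigl(1+\ln(2a)\bigr)$. Here I would invoke the lower bound (\ref{xeasy}) with a fixed $m\ge 4$: for $a\ge m$ its right-hand side is a sum of positive terms whose leading behavior in $a$ is a positive constant (depending on $x>3$) times $a^{m-1}$, so $xP_{a-1}(x)$ outgrows $a^{2}(1+\ln(2a))$ and the inequality holds for all $a$ beyond an explicit threshold $a_0$.

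The main obstacle is the usual endgame: the asymptotic comparison only settles $a\ge a_0$, so the finitely many pairs $(a,b)$ with $2\le a<a_0$ must be treated separately. Unlike the integer case these are not single numerical checks but polynomial positivity statements $P_{a,b}(x)>0$ on $(3,\infty)$; I would dispatch them by writing each $P_{a,b}(x)$ explicitly and verifying that it has no real root exceeding $3$, equivalently that its coefficients in the shifted variable $x-3$ are nonnegative. Retaining the discarded positive part $R-R_1$ in the estimate would lower $a_0$ and thereby shrink this finite verification.
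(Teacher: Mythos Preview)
Your argument is essentially the paper's own: the same strong induction on $n=a+b$, the same $L+R$ decomposition coming from the recurrence, the observation that for $x>3$ every term $f_k$ with $k\ge 2$ is nonnegative (so no separate $R_3$ analysis is needed), the same bound $|L|\le b(1+\ln(a+b))P_b(x)$, and the same reduction to $P_{a-1}(x)$ outgrowing $a^2(1+\ln(2a))$ via (\ref{xeasy}), followed by a finite check of the small $(a,b)$. Your inclusion of the factor $x$ from (\ref{eq:xrecurrence}) and your derivation of $\sum_{i<b}P_i(x)\le \tfrac{b}{x}P_b(x)$ directly from the recurrence are clean touches, but they do not change the structure.

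One small correction in your endgame: having all nonnegative coefficients in the variable $x-3$ is \emph{sufficient} for $P_{a,b}(x)>0$ on $(3,\infty)$, not equivalent to it, so you should not rely on that criterion; the paper simply verifies (Table~\ref{nullstellen}) that the largest real zero of each remaining $P_{a,b}$ is at most $3$, which is what you should do as well.
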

\begin{proof}
We follow the proof by induction of the Bessenrodt--Ono inequality presented in Section \ref{proof:BO}.
Let $n \geq 2$. The statement $S\left( n\right) $
is true if for all partitions
$n=a +b$ with $a,b \geq 1$ holds $P_{a,b}(x)
>0$ for $x >3$.
Let $1\leq b \leq a$.
Let $n >N_0$ and $S(m)$ be true for all $ 2
\leq m \leq n-1$. We show that it is sufficient to put $N_0=
14$.
Note that $S(n)$ is true for all $1 \leq a,b \leq 14$ (see
Table~\ref{nullstellen}).
Let $P_{a,b}(x) = L + R$ with $L$ and $R$ defined as in (\ref{startl}) and (\ref{startr}),
where we have to substitute $p(n)$ by $P_n(x)$. 

\subsubsection{Left sum $L$}
The left sum satisfies
\begin{equation*} L > -b \, P_{b}\left( x
\right) \, 
\left( 1 + \ln \left( a+b\right) \right)  .
\end{equation*}

\subsubsection{Right sum $R$}
We take care about the dominating term for $k=1$.
Similar to (\ref{decomposition}) with 
\begin{equation*}
f_k(a,b):=
\frac{\sigma(k)}{a}\,
P_{a-k}(x) \, 
P_b(x) -\frac{\sigma(k)}{a+b} \,
P_{a+b-k}\left( x\right)
\end{equation*}
we study $R=R_1+R_2+R_3$. By the induction hypothesis we get $R_2 >0$.
And since we have no extra condition on $a$ and $b$ we also get $R_3 \geq 0$.
Thus, only $R_1$ attached to $k=1$ contributes and leads to
\begin{equation*}
R >  \frac{b}{2 \, a^2} \, P_{a-1}(x) \, P_{b}(x).
\end{equation*}
\subsubsection{Final step}
Putting everything together leads to
\begin{eqnarray}
P_{a,b} \left(
x\right) &
>&
\frac{ b \, P_{b}\left( x\right) }{2a^{2}}\left(
-2a^{2}\left( 1+\ln \left( a+b\right) \right) +P_{a-1}\left(
x\right) \right) \label{eq:nonumber} \\
&>&
\frac{ b \, P_{b} \left(
x\right)}{ 2 \, a^2}  
\left( -2a^{2}\left( 1+\ln \left( 2a\right) \right) +
\sum_{\ell=1}^{6
}
\binom{a-2}{\ell -1}\frac{
3^{\ell }}{\ell !}\right)
. \label{xfinal}
\end{eqnarray}
In the last step we used the property (\ref{xeasy}) and that $x>3$.
We obtain that the expression (\ref{xfinal}) is positive for all $ a \geq 12
$. Now $200
\left( 1+\ln \left( 20
\right) \right) <800
$ and $P_{8}\left( 3\right) =810$. Therefore,
(\ref{eq:nonumber}) is positive already for $a\geq 8
$. Since the leading
coefficient $\frac{1}{a!b!}-\frac{1}{\left( a+b\right) !}$ of
$P_{a,b}\left( x\right) $ is positive, we only have to check the largest real
zero
of all remaining $P_{a,b}\left( x\right)$. This was done for $2
\leq b+
a\leq N_{0}
=14$ with
PARI/GP (Table~\ref{nullstellen}). Note that in the case of
$P_{1,1}\left( x\right) =\left( x-3\right) \frac{x}{2}$ the largest real zero
is exactly $3$.
\end{proof}
\subsection{The $2$-colored partitions}
Chern--Fu--Tang (\cite{CFT18}, Theorem 1.2) proved the following result. 
\begin{theorem}
Let $a$ and $b$ be positive integers with $a,b \geq 1$ and $n=a+b \geq 5$.
Then $P_{a,b}(2)>0$.
\end{theorem}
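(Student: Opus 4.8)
The plan is to run the induction of Section~\ref{proof:BO} and of the preceding proposition, now specialised to $x=2$ with the thresholds $A=1$ and $B=5$. Define $S(n)$ to be the statement that $P_{a,b}(2)>0$ for every decomposition $n=a+b$ with $1\le b\le a$ and $n\ge 5$; by symmetry it is enough to treat $b\le a$. After fixing a cutoff $N_0$, I would verify $S(\ell)$ directly with PARI/GP for all $5\le \ell\le N_0$, and then assume $n>N_0$ together with $S(m)$ for every $5\le m\le n-1$. Writing $P_{a,b}(2)=L+R$ as in (\ref{startl})--(\ref{startr}) with $p(\cdot)$ replaced by $P_{\cdot}(2)$, the left sum is bounded below exactly as before, using (\ref{sigma}), by $L>-b\,P_b(2)\,\bigl(1+\ln(a+b)\bigr)$.

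For the right sum I would split $R=R_1+R_2+R_3$ through (\ref{decomposition})--(\ref{fkab}), now with the threshold $k_0=a-\max\{5-b,1\}+1$, so that every summand of $R_1$ and $R_2$ has $a+b-k\ge 5$ and hence falls under the induction hypothesis. The term $R_1$ (where $k=1$) is handled by applying $S(a+b-1)$ to the pair $(a-1,b)$, giving $R_1>\frac{b}{2a^2}\,P_{a-1}(2)\,P_b(2)$ as in Section~\ref{proof:BO}; and $R_2>0$ follows termwise, since each summand reduces to a Bessenrodt--Ono inequality $P_{a-k}(2)P_b(2)>P_{a+b-k}(2)$ for a pair of total size at least $5$, combined with $\frac1a\ge\frac1{a+b}$.

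The genuinely new work is in $R_3$. In the case $x>3$ one simply had $R_3\ge 0$, because there every pair satisfies Bessenrodt--Ono; at $x=2$, however, the summands of $R_3$ reduce to the inequality for pairs $(a-k,b)$ whose total $a+b-k$ lies in $\{b,\dots,4\}$, precisely the exceptional range in which the inequality may fail. These bad summands occur only for $b\le 3$ (for $b\ge 4$ one has $k_0=a$ and $R_3=f_a(a,b)\ge 0$), and they are finite in number, so I would isolate them in a sub-decomposition $R_3=R_{31}+R_{32}+R_{33}$ analogous to (\ref{big}) and bound each by hand using the explicit small values $P_1(2),\dots,P_4(2)$ together with (\ref{sigma}). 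This produces a lower bound of the shape $R_3>-c\,b\,P_b(2)\,\bigl(1+\ln(a+b)\bigr)$ with an explicit constant $c$.

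Collecting $L$, $R_1$, $R_2$, $R_3$ then gives
\[
P_{a,b}(2)>\frac{b\,P_b(2)}{2a^2}\Bigl(-C\,a^2\,\bigl(1+\ln(2a)\bigr)+P_{a-1}(2)\Bigr),\qquad C:=2(1+c),
\]
using $a+b\le 2a$. Substituting the lower bound (\ref{xeasy}) at $x=2$, namely $P_{a-1}(2)\ge\sum_{\ell=1}^{m}\binom{a-2}{\ell-1}\frac{2^\ell}{\ell!}$, turns the bracket into a polynomial in $a$ of degree $m-1$; choosing $m$ large enough that this degree exceeds $2$ makes the polynomial outgrow $a^2(1+\ln(2a))$, so the bracket is positive for all $a$ past an explicit bound, and $a$ below that bound forces $a+b\le 2a\le N_0$, exactly the cases settled by the computer check. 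The main obstacle I expect is this $R_3$ estimate: at $x=2$ the weaker growth $2^\ell/\ell!$ (against $3^\ell/\ell!$ for $x>3$) and the larger negative contribution of the exceptional boundary pairs force a larger cutoff $N_0$ and more careful bookkeeping of the constant $c$ than in the $x>3$ proposition, even though the final asymptotic comparison closes in the same manner.
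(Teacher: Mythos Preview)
Your proposal is correct and follows the paper's approach: the induction scheme with $A=1$, $B=5$, the $L$-bound, the $R_1$ and $R_2$ estimates, and the final polynomial-versus-logarithm comparison via \eqref{xeasy} all match what the paper does. The only point where the paper is more economical is exactly the part you flag as the main obstacle, namely $R_3$. Rather than carrying out a triple sub-decomposition $R_{31}+R_{32}+R_{33}$ as in Section~\ref{proof:BO}, the paper simply tabulates $P_{a',b'}(2)$ for $1\le a',b'\le 4$ and observes that the only negative entry is $P_{1,1}(2)=-1$ (the entries $P_{2,1}(2)=P_{3,1}(2)=0$). Thus $R_3\ge 0$ whenever $b\ge 2$, and for $b=1$ the single bad summand at $k=a-1$ gives $R_3>-\sigma(a-1)/a>-\tfrac12\,b\,P_b(2)\,(1+\ln(a+b))$. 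This produces your constant $c=\tfrac12$, hence $C=3$, and with $m=5$ in \eqref{xeasy} the cutoff $N_0=28$ suffices. Your more elaborate bounding of $R_3$ would also close, just with more bookkeeping than is actually needed.
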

\begin{proof}
We have $A=1$, $B=5$, and $k_0= a - \max \left\{ 5-b,1\right\}+1 $.
Let $n\geq 5$ and $S(n)$ be the statement: $P_{a,b}(n)>0$ for all $a,b \geq 1$ with $n=a+b$.
A numerical calculation with PARI/GP shows that $S(m)$ is true for all $5 \leq m \leq N_0=28$.
We prove $S(n)$ by induction on $n$. Let $n=a +b >N_{0}$ and $1 \leq b \leq a$. Let $P_{a,b}(2)= L +R$.
Then $$L > -b \, P_b(2) \, \big( 1 + \ln(a+b) \big).$$ Further, 
\begin{equation*}
 R_1 > \frac{b }{ 2 \, a^2} \, P_{a-1}(2) \, P_b(2).
\end{equation*}
We have $R_2 \geq 0$ by the induction hypothesis and $R_3 \geq 0$ for $b >3$.
Moreover
$$\frac{P_{a-k}\left( 2\right) P_{b}\left( 2\right) }{a}-\frac{P_{a+b-k}\left( 2\right) }{a+b}>\frac{P_{a-k}\left( 2\right) P_{b}\left( 2\right) -P_{a+b-k}\left( 2\right) }{a}.$$
From the induction hypothesis and
Table~\ref{zwei} we see that this is 
non-negative for $a\geq 2$ or $b\geq 2$. For $a=1=b$ we obtain
$\left( P_{1}\left( 2\right) \right) ^{2}-P_{2
}\left( 2\right) =-1$.
This leads to
\begin{eqnarray}
P_{a,b} \left(
2\right) &> &
\frac{ b \,
P_{b}\left(
2\right)}{ 2 \, a^2}
\left( -3
\, a^2 \, \left( 1+\ln \left( a+b\right) \right) +
P_{a-1}\left(
2\right) \right)\\
&>&  \frac{ b \,
P_{b}\left( 2 \right)}{ 2 \, a^2}
\left( -3
\, a^2 \, \left( 1+\ln \left( 2a\right) \right) +  \sum_{\ell=1}^{5} \frac{2^{\ell }}{\ell !}  \binom{a-2}{\ell -1} \right). \label{eq:final2}
\end{eqnarray}
In the last step we used the property (\ref{xeasy}). For $a\rightarrow \infty $
we can immediately observe that this is positive
since the sum is a polynomial of
degree~$4$ in~$a$, which grows faster than $a^{2}\left( 1+\ln \left( 2a\right) \right) $.
In fact, the expression (\ref{eq:final2}) is positive for all $ a \geq 15
$.
For the remaining $5\leq a+b\leq 28$
we have checked with PARI/GP that $P_{a,b}\left( 2\right) >0$.
\end{proof}

\begin{table}[H]
\[
\begin{array}{ccccc}
\hline
a\backslash b & 1 & 2 & 3 & 4 \\ \hline \hline
1 & -1 & 0 & 0 & 4 \\
2 & 0 & 5 & 14 & 35 \\
3 & 0 & 14 & 35 & 90 \\
4 & 4 & 35 & 90 & 215\\ \hline
\end{array}
\]
\caption{\label{zwei}Values of $P_{a,b}\left( 2
\right) $ for $a,b\in \left\{ 1,2,3,4\right\} $.}
\end{table}

\subsection{Proof of Theorem \ref{new}}

Let $n \geq 5
$. The statement $S\left( n\right) $
is true if for all partitions
$n=a +b$ with $a,b \geq 1$
holds $P_{a,b}(x)
>0$ for $x >1.8$.
Let $1\leq b \leq a$.
Let $n >N_{0}$ and $S(m)$ be true for all $ 5
\leq m \leq n-1$. We show that it is sufficient to put $N_{0}=28
$.
Note that $S(n)$ is true for all $1 \leq a,b \leq 28
$ (compare
Table~\ref{nullstellen}).
Let $P_{a,b}(x) = L + R$ with $L$ and $R$ defined as in (\ref{startl}) and (\ref{startr}),
where we have to substitute $p(n)$ by $P_n(x)$. 
\subsubsection{Left sum $L$}
The left sum satisfies
\begin{equation*} L > -b \, P_{b}\left( x
\right) \, 
\left( 1 + \ln \left( a+b\right) \right)  .
\end{equation*}

\subsubsection{Right sum $R$}
We study $R=R_1+R_2+R_3$. By the induction hypothesis we get $R_2 >0$.
$R_1$ associated to $k=1$ leads to
\begin{equation*}
R_{1} >  \frac{b}{2 \, a^2} \, P_{a-1}(x) \, P_{b}(x).
\end{equation*}
We find that $R_{32}=0$ and $R_{33}\geq 0$. For $b\geq 4$ we
can apply the induction hypothesis and obtain $R_{31}>0$.
In case $1\leq b\leq 3$ we find from Table~\ref{nullstellen} that
$P_{a-k,b}\left( 1.8\right) >0$ for
$\left( a-k,b\right) \notin \left\{ \left( 2,1\right) ,\left( 3,1\right) \right\} $.
Therefore,
$$R_{3,1}>\frac{\sigma \left( a-3\right) P_{3,1}\left( x\right) +\sigma \left( a-2\right) P_{2,1}\left( x\right)
}{a}.$$ It can be checked that the polynomials
$P_{2,1}\left( x\right) =\frac{1}{3}\*x^3
 - \frac{4}{3}\*x$ and $P_{3,1}\left( x\right) =\frac{1}{8}\*x^4
 + \frac{3}{4}\*x^3
 - \frac{9}{8}\*x^2
 - \frac{7}{4}\*x$ are monotonically increasing for
$x\geq 1.8$. Thus,
$$ R > R_{3,1}>\frac{
\sigma \left( a-3\right) P_{3,1}\left( 1.8\right) +\sigma \left( a-2\right) P_{2,1}\left( 1.8\right) }{a}
>-1.5648\left( 1+\ln \left( a\right) \right). $$

\subsubsection{Final step}
Putting everything together leads to
\begin{eqnarray}
P_{a,b} \left(
x\right) &
>&
\frac{ b \, P_{b}\left( x\right) }{2a^{2}}\left(
-3.8a^{2}\left( 1+\ln \left( a+b\right) \right) +P_{a-1}\left(
x\right) \right) \label{eq:1.8nonumber} \\
&>&
\frac{ b \, P_{b} \left(
x\right)}{ 2 \, a^2}  
\left( -3.8a^{2}\left( 1+\ln \left( 2a\right) \right) +
\sum_{\ell=1}^{9
}
\binom{a-2}{\ell -1}\frac{
1.8
^{\ell }}{\ell !}\right)
. \label{x1.8final}
\end{eqnarray}
In the last step we used the property (\ref{xeasy}) and that $x
\geq 1.8
$.
We obtain that the expression (\ref{x1.8final}) is positive for all $ a \geq 30
$. Now $450
\left( 1+\ln \left( 30
\right) \right) <2000<
P_{15}\left( 1.8\right) $. Therefore,
(\ref{eq:1.8nonumber}) is already positive for $a\geq 15
$. Since the leading
coefficient 
of
$P_{a,b}\left( x\right) $ is positive we only have to determine the largest real
zero
of all remaining $P_{a,b}\left( x\right)$. This was done for $1\leq b
,a\leq 28$ with
PARI/GP (compare Table \ref{nullstellen}).

\begin{table}[H]
\[
\begin{array}{ccccccccccccccc}
\hline
a\backslash b&1&2&3&4&5&6&7&8&9&10&11&12&13&14\\ \hline \hline
1&3.0&2.0&2.0&1.7&1.7&1.6&1.6&1.5&1.5&1.4&1.5&1.4&1.4&1.4\\
2&2.0&1.4&1.2&1.1&1.1&1.0&1.0&0.9&0.9&0.9&0.9&0.9&0.9&0.8\\
3&2.0&1.2&1.2&1.0&1.0&0.9&0.9&0.8&0.9&0.8&0.8&0.8&0.8&0.8\\
4&1.7&1.1&1.0&0.9&0.9&0.8&0.8&0.7&0.7&0.7&0.7&0.6&0.6&0.6\\
5&1.7&1.1&1.0&0.9&0.9&0.7&0.8&0.7&0.7&0.7&0.7&0.6&0.7&0.6\\
6&1.6&1.0&0.9&0.8&0.7&0.7&0.7&0.6&0.6&0.6&0.6&0.5&0.5&0.5\\
7&1.6&1.0&0.9&0.8&0.8&0.7&0.7&0.6&0.6&0.6&0.6&0.5&0.6&0.5\\
8&1.5&0.9&0.8&0.7&0.7&0.6&0.6&0.6&0.6&0.5&0.5&0.5&0.5&0.5\\
9&1.5&0.9&0.9&0.7&0.7&0.6&0.6&0.6&0.6&0.5&0.5&0.5&0.5&0.5\\
10&1.4&0.9&0.8&0.7&0.7&0.6&0.6&0.5&0.5&0.5&0.5&0.5&0.5&0.4\\
11&1.5&0.9&0.8&0.7&0.7&0.6&0.6&0.5&0.5&0.5&0.5&0.5&0.5&0.4\\
12&1.4&0.9&0.8&0.6&0.6&0.5&0.5&0.5&0.5&0.5&0.5&0.4&0.4&0.4\\
13&1.4&0.9&0.8&0.6&0.7&0.5&0.6&0.5&0.5&0.5&0.5&0.4&0.4&0.4\\
14&1.4&0.8&0.8&0.6&0.6&0.5&0.5&0.5&0.5&0.4&0.4&0.4&0.4&0.4\\ \hline
\end{array}
\]
\caption{\label{nullstellen}
Approximative largest real zeros of $P_{a,b}\left( x\right) $ for $1\leq a,b\leq
14$.}
\end{table}


\end{document}